\newtheorem{thm}{Theorem}
\newtheorem{cor}{Corollary}
\newtheorem{lem}[thm]{Lemma}
\newcommand{\bE}{\mathbb{E}} % Bundle subspace
\begin{document}

%\date{\today. {\itshape } }
\title{Partially hyperbolic symplectomorphism with $C^1$ bundles}
\author{Eramane Bodian, Khadim War\thanks{We would like to thank the Mathematics Department and the Laboratory of Pure and Applied Mathematics of the Assane Seck's University of Ziguinchor.}}

\date{}
%\affil{Instituto de Matematica Pura e Aplicada (IMPA)}

\maketitle
\begin{abstract}
We prove dynamical coherence for partial hyperbolic symplectomorphism in dimension 4 whose stable and unstable bundles are $C^1$.
\end{abstract}

\section{Introduction}

Let $M$ be a closed Riemannian manifold of dimension four. A $C^1$ diffeomorphism $f:M\to M$ is said to be \textit{partially hyperbolic} if there is a continuous invariant splitting 
\(
TM=\bE^s\oplus\bE^c\oplus\bE^u
\)
where the stable and unstable bundle $\bE^s$ and $\bE^u$ are hyperbolic while the central bundle $\bE^c$ has an intermediate behavior, see Section \ref{sec:pre} for exact definition. 

Partially hyperbolic diffeomorphisms arise naturally from robust transitive or stably ergodic diffeomorphism \cite{DPU99,HS17}. It is known that \cite{HPS77} the stable and unstable bundles are tangent to invariant foliation while the central bundle is not necessary tangent to a foliation let alone being invariant. A partially hyperbolic diffeomorphism is said to be \textit{dynamically coherent} if there exists an invariant foliation tangent to $\bE^c$. Dynamical coherence is a key  assumption in the study of ergodicity \cite{SW00}, classification of partially hyperbolic diffeomorphism \cite{HP}  and finding   topological obstruction for a manifold to support such system \cite{BI08,HP}. In dimension $3$, it is rather understood what are the possibilities and in particular there is a well known counter example \cite{HHU08}.  When all three bundles are $C^1$, A. Hammerlindl \cite{H11} prove that $f$ is dynamically coherent under the condition of $2$-partially hyperbolic. In dimension three it is proved that volume domination and $C^1$ regularity of all three bundles implies dynamically coherent \cite{LTW1}. In this paper, we study the problem of dynamical coherence under the assumption that $f$ leaves invariant a symplectic form.

Let $\omega$ be symplectic 2-form on $M$, i.e. $\omega$ is closed and nondegenerate. A diffeomorphism $f: M\to M$ is a  partially hyperbolic symplectomorphism if it is  partially hyperbolic and leaves invariant the symplectic form $(f^*\omega=\omega)$.
Partially hyperbolic symplectomorphisms are abundant in the space of symplectic diffeomorphism. V. Horita and A. Tahzibi \cite{HT06} prove that  robustly transitive  and stably ergodic symplectic diffeomorphism are partially hyperbolic. In this paper, we prove the following.

\begin{thm}\label{thm:main}
Let $f: (M,\omega)\to(M,\omega)$ be a $C^2$ partially hyperbolic symplectomorphism on a manifold of dimension four. If $\bE^s$ and $\bE^u$ are $C^1$ then $f$ is dynamically coherent. Moreover the center foliation is $C^1$.
\end{thm}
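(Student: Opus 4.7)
The plan is to use the symplectic form to exhibit $\bE^c$ as a $C^1$ invariant distribution and then verify Frobenius integrability via a cohomological argument driven by the uniform expansion on $\bE^u$ (and contraction on $\bE^s$).

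First, I would observe that in dimension four a partially hyperbolic symplectomorphism is forced to have $\dim\bE^s=\dim\bE^u=1$ and $\dim\bE^c=2$: combining $f^*\omega=\omega$ with contraction/expansion and the domination gives that $\bE^s$ and $\bE^u$ are $\omega$-isotropic and each is $\omega$-orthogonal to $\bE^c$; nondegeneracy of $\omega$ then forces the stable and unstable dimensions to coincide, makes $\omega|_{\bE^c}$ nondegenerate, and identifies
\[
\bE^c=(\bE^s\oplus\bE^u)^{\perp_\omega}.
\]
Since $\omega$ is smooth and $\bE^s\oplus\bE^u$ is $C^1$ by hypothesis, this exhibits $\bE^c$ as a $C^1$ subbundle. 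Fix local $C^1$ unit sections $e^s\in\bE^s$, $e^u\in\bE^u$ and set $\alpha^s=-\iota_{e^u}\omega$, $\alpha^u=\iota_{e^s}\omega$. Then $\bE^c=\ker\alpha^s\cap\ker\alpha^u$, and $d\omega=0$ yields $d\alpha^s=-\LLL_{e^u}\omega$ and $d\alpha^u=\LLL_{e^s}\omega$. By Frobenius, integrability of $\bE^c$ reduces to showing that these two Lie derivatives vanish upon restriction to $\bE^c$.

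The genuinely dynamical step, and the one I expect to be the main obstacle, is this vanishing. Writing $(\LLL_{e^u}\omega)|_{\bE^c}=h\,\omega|_{\bE^c}$ for a continuous scalar $h$, one exploits three ingredients: naturality of the Lie derivative, the invariance $f^*\omega=\omega$, and the scaling $f^*e^u=(\lambda^u)^{-1}e^u$ with $\lambda^u(p)=\|Df_p|_{\bE^u_p}\|$. Using
\[
\LLL_{(\lambda^u)^{-1}e^u}\omega=(\lambda^u)^{-1}\LLL_{e^u}\omega+d((\lambda^u)^{-1})\wedge\iota_{e^u}\omega
\]
and noting that the correction term is proportional to $\alpha^s$ and hence vanishes on $\bE^c$, evaluating on $\bE^c$-vectors at $p$ yields the cohomological equation
\[
h(p)=\lambda^u(p)\,h(f(p)).
\]
Iterating backward gives $|h(f^{-n}(p))|=|h(p)|\cdot\|Df^n|_{\bE^u_{f^{-n}(p)}}\|\ge|h(p)|\mu^n$ for the uniform expansion rate $\mu>1$; continuity of $h$ on the compact manifold $M$ then forces $h\equiv 0$. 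The parallel argument with $e^s$, where $\lambda^s<1$ drives the blow-up under \emph{forward} iteration, gives $(\LLL_{e^s}\omega)|_{\bE^c}=0$.

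With both obstructions vanishing, Frobenius applied to the $C^1$ distribution $\bE^c$ produces a $C^1$ foliation tangent to $\bE^c$, and $f$-invariance of the foliation is inherited from $f$-invariance of $\bE^c$; this yields both conclusions of the theorem. The delicate points in the write-up are ensuring the regularity of $h$ (since $\LLL_{e^u}\omega$ is only $C^0$ when $e^u$ is merely $C^1$, one must express it as $d\iota_{e^u}\omega$ to see it is continuous) and verifying that the Lie-derivative correction term really lands in the ideal generated by $\alpha^s$ on $\bE^c$; once the cohomological equation is rigorously in place, uniform hyperbolicity disposes of the problem at once.
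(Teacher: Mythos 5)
Your proposal is correct, and it reaches the conclusion by a route that is recognizably the same mechanism as the paper's but organized differently. The paper contracts $\omega$ with the stable unit field to get the $C^1$ one-form $\eta=i_{X^s}\omega$ with $\ker\eta=\bE^s\oplus\bE^c$, proves $\eta\wedge d\eta=0$ by an asymptotic estimate (writing $X^s=f^{-n}_*X^s/F_n$, applying Cartan's formula, and bounding $\|f^n_*Y_n\|\cdot\|f^n_*Z_n\|$ via the symplectic invariance on singular vectors of $Df^n|_{\bE^c}$, then letting $n\to\infty$), integrates $\bE^s\oplus\bE^c$ and $\bE^c\oplus\bE^u$ as codimension-one foliations, and takes their intersection. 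You instead integrate $\bE^c=\ker\alpha^s\cap\ker\alpha^u$ directly, and replace the limit estimate by an exact one-step cohomological identity $h(p)=\lambda^u(p)h(f(p))$ for the scalar $h$ defined by $(\LLL_{e^u}\omega)|_{\bE^c}=h\,\omega|_{\bE^c}$, killed by uniform expansion and compactness. This is a genuine simplification at one point: because $\bE^c$ is two-dimensional and $\omega|_{\bE^c}$ is nondegenerate, every $2$-form on $\bE^c$ is a scalar multiple of $\omega|_{\bE^c}$, so you never need the paper's singular-value lemma (its inequality \eqref{eq:ccn}) nor the choice of $n$-dependent orthonormal frames. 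Your identification of the correction term $d((\lambda^u)^{-1})\wedge\iota_{e^u}\omega$ as vanishing on $\bE^c\times\bE^c$ is exactly right, as is the continuity of $h$ via $\LLL_{e^u}\omega=d\iota_{e^u}\omega$. Two small caveats: (i) with the paper's stated definition of dynamical coherence (an invariant foliation tangent to $\bE^c$) your argument suffices, but if one wants the more standard notion requiring integrability of $\bE^s\oplus\bE^c$ and $\bE^c\oplus\bE^u$ as well, you would additionally need $d\alpha^u(X^s,X^c)=0$ and $d\alpha^s(X^u,X^c)=0$, which follow from the same scaling argument using the domination \eqref{eq:ph} (this is the paper's \eqref{eq:sc0}); (ii) the isotropy and $\omega$-orthogonality facts you invoke at the start ($\omega(\bE^s,\bE^s\oplus\bE^c)=0$, etc.) are exactly the content of the paper's \eqref{eq:su}--\eqref{eq:cc}, quoted there from Xia--Zhang, so they should be proved or cited rather than merely asserted.
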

Since the center foliation given by Theorem \ref{thm:main} is of class $C^1$ then  using \cite[Theorem 7.1, Theorem 7.2]{HPS77}  we deduce the following corollary.
\begin{cor}
Let $f: (M,\omega)\to(M,\omega)$ as in Theorem \ref{thm:main}. Then there exists a neighborhood $\mathcal U$ of f in the space of $C^1$ partially hyperbolic diffeomorphism such that every $g\in\mathcal U$ is dynamically coherent.
\end{cor}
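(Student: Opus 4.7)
The plan is to derive the corollary from the HPS stability theorem for normally hyperbolic foliations. Observe first that we cannot simply reapply Theorem \ref{thm:main} to a nearby $g$, because $\mathcal U$ is taken in the space of $C^1$ partially hyperbolic diffeomorphisms rather than the symplectic ones; a genuine perturbation result is required.

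By Theorem \ref{thm:main}, $f$ admits an invariant $C^1$ center foliation $\mathcal F^c$ tangent to $\bE^c$. I would first record that $\mathcal F^c$ is normally hyperbolic for $f$ in the sense of \cite{HPS77}: this is an immediate reformulation of the partial hyperbolicity of $f$ along the leaves of $\mathcal F^c$, since the contraction rates on $\bE^s$ and expansion rates on $\bE^u$ dominate whatever growth occurs along center leaves. The remaining hypothesis required by \cite[Theorem 7.1, Theorem 7.2]{HPS77} is plaque expansiveness, and here the $C^1$ regularity of $\mathcal F^c$ is the decisive input: any $C^1$ (hence Lipschitz) normally hyperbolic foliation is automatically plaque expansive, because two pseudo-orbits through plaque chains that shadow each other cannot leave a common plaque under the smoothly varying local charts.

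With both hypotheses verified, \cite[Theorem 7.1]{HPS77} provides a $C^1$ neighborhood $\mathcal U$ of $f$ in $\mathrm{Diff}^1(M)$ such that for every $g\in\mathcal U$ there is a $g$-invariant foliation $\mathcal F^c_g$, $C^0$-close to $\mathcal F^c$, whose leaves are $C^1$ and which is normally hyperbolic for $g$. Since partial hyperbolicity is itself a $C^1$-open property, shrinking $\mathcal U$ we may assume every $g\in\mathcal U$ carries a partially hyperbolic splitting $\bE^s_g\oplus\bE^c_g\oplus\bE^u_g$ that depends continuously on $g$. The tangent distribution $T\mathcal F^c_g$ is $Dg$-invariant, of the same dimension as $\bE^c_g$, and $C^0$-close to it, so uniqueness of the dominated splitting forces $T\mathcal F^c_g=\bE^c_g$. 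Hence $g$ is dynamically coherent, which is the content of the corollary.

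The only point in this plan that demands care is the verification of plaque expansiveness; once Theorem \ref{thm:main} is invoked to supply a \emph{$C^1$} center foliation (rather than only a continuous one), plaque expansiveness is standard and the corollary reduces to a direct reading of the HPS persistence package. In fact, under $C^1$ smoothness of the foliation, Theorem 7.2 of \cite{HPS77} offers a convenient combined statement of normal hyperbolicity, plaque expansiveness, and $C^1$-persistence from which the conclusion may be read off immediately.
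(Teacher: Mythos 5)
Your proposal is correct and follows essentially the same route as the paper: the authors likewise deduce the corollary by invoking the Hirsch--Pugh--Shub persistence theorems, using \cite[Theorem 7.2]{HPS77} to get plaque expansiveness from the $C^1$ regularity of $\mathcal F^c$ and \cite[Theorem 7.1]{HPS77} for the structural stability of the normally hyperbolic foliation. Your additional remarks (why one cannot simply reapply Theorem \ref{thm:main} to a non-symplectic perturbation, and why $T\mathcal F^c_g=\bE^c_g$ by uniqueness of the dominated splitting) merely make explicit steps the paper leaves implicit.
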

In Section \ref{sec:pre}, we recall  some basic properties of partially hyperbolic symplectomorphism and in Section \ref{sec:proof} we prove Theorem \ref{thm:main}.

\section{Preliminaries}\label{sec:pre}
In this section, we recall the properties of partially hyperbolic symplectomorphism that are needed to prove Theorem \ref{thm:main}. We suppose that $(M,\omega)$ is a four dimensional symplectic manifold.
$f: M\to M$ is said to be partially hyperbolic if there exists a continuous invariant splitting
\[
T_pM=\bE^s(p)\oplus\bE^c(p)\oplus\bE^u(p)\quad\text{ with} \quad Df_p\bE^{\sigma}(p)=\bE^\sigma(f(p)), \sigma=s,c,u\quad \forall p\in M
\]
and  there exist $\lambda\in(0,1)$ and $C>0$ such  that for all $n\geq1$ we have 
\begin{equation}\label{eq:ph}
\|Df^n|_{\bE^s}\|, \|Df^{-n}|_{\bE^u}\|,  \frac{\|Df^nX^s\|}{\|Df^{n}X^c\|}, \frac{\|Df^nX^c\|}{\|Df^{n}X^u\|}\leq C\lambda^n
\end{equation}
for every unit vectors $X^\sigma\in\bE^\sigma, \sigma=s,c,u$. 

For the rest of this section, we suppose that $f:(M,\omega)\to(M,\omega)$ is a partially hyperbolic symplectomorphism. Moreover we suppose that $\bE^s$ and $\bE^u$ are oriented line bundles therefore they are spanned by unit vector fields $X^s$ and $X^u$.

 The following properties can be found in the literature \cite{XZ06, HT06} but for completeness, we include the proofs.
\begin{lem} 
 For every $p\in M$, we have the following 
\begin{enumerate}
\item If $X^s(p)\in\bE^s(p)$ and $X^u(p)\in\bE^u(p)$ are unit vectors then there exists a nonzero function $h: M\to\mathbb{R}$   
\begin{equation}\label{eq:su}
\omega_p(X^s(p), X^u(p))=h(p),
\end{equation}   
\item For every $Y(p)\in \bE^{c}(p)$ we have 
\begin{equation}\label{eq:scu}
\omega_p(X^s(p), Y(p))=\omega_p(X^u(p), Y(p))=0,
\end{equation} 
\item There exists $C>0$ such that if  $Y(p), Z(p)\in \bE^c(p)$ that are orthonormal then 
\begin{equation}\label{eq:cc}
C^{-1}\leq\omega_p(Z(p), Y(p))\leq C.
\end{equation} 
\end{enumerate}
\end{lem}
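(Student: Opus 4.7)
The plan is to identify the symplectic orthogonals $(\bE^s)^\omega$ and $(\bE^u)^\omega$ with the center-stable and center-unstable distributions $\bE^s \oplus \bE^c$ and $\bE^c \oplus \bE^u$ respectively; once these are established, the three statements follow as direct corollaries.

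First I would check that $(\bE^s)^\omega$ is a continuous $Df$-invariant rank-$3$ distribution containing $\bE^s$. The inclusion is immediate since $\bE^s$ is $1$-dimensional and hence isotropic for $\omega$; the rank comes from nondegeneracy of $\omega$; and $Df$-invariance is a one-line calculation using $f^*\omega = \omega$ together with the fact that $Df_p X^s(p)$ is proportional to $X^s(fp)$.

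The heart of the proof is the identification $(\bE^s)^\omega = \bE^s \oplus \bE^c$. Passing to the quotient $TM/\bE^s$, the induced splitting $\overline{\bE^c} \oplus \overline{\bE^u}$ is dominated by \eqref{eq:ph}, with $\overline{\bE^u}$ dominating $\overline{\bE^c}$, and $(\bE^s)^\omega / \bE^s$ is a continuous invariant $2$-plane inside this dominated $3$-bundle. By the standard rigidity of dominated splittings — any invariant continuous section of $\mathrm{Hom}(\overline{\bE^c}, \overline{\bE^u})$ would have norm growing like $\lambda^{-n}$, contradicting boundedness on the compact $M$ — the only such invariant $2$-plane transverse to $\overline{\bE^u}$ is $\overline{\bE^c}$ itself. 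The remaining alternative, $(\bE^s)^\omega/\bE^s \supset \overline{\bE^u}$, is equivalent to $\omega(X^s, X^u) \equiv 0$ and makes $\bE^s \oplus \bE^u$ an invariant Lagrangian subbundle; the induced non-degenerate pairing between $\bE^c$ and $\bE^s \oplus \bE^u$ then splits $\bE^c = K^s \oplus K^u$ into two continuous invariant $1$-dimensional subbundles, and invariance of $\omega$ against \eqref{eq:ph} forces growth rates for $Df$ on the resulting four line bundles that contradict the strict domination. Thus $(\bE^s)^\omega = \bE^s \oplus \bE^c$; applying the same reasoning to $\bE^u$ gives $(\bE^u)^\omega = \bE^c \oplus \bE^u$.

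From these identifications the three claims are immediate: \eqref{eq:scu} from $\bE^c \subset (\bE^s)^\omega \cap (\bE^u)^\omega$; \eqref{eq:su} from $X^u \notin (\bE^s)^\omega$, continuity of $h$, and compactness of $M$; and \eqref{eq:cc} from $\bE^c = (\bE^s \oplus \bE^u)^\omega$ together with nondegeneracy of $\omega|_{\bE^s \oplus \bE^u}$ (which is \eqref{eq:su}) and continuity-plus-compactness for the two-sided bound. The principal difficulty lies in the rigidity step, and specifically in eliminating the Lagrangian alternative: the growth-rate contradiction must be drawn pointwise rather than just in a Lyapunov-average sense, since the conclusion is needed at every $p \in M$ and one cannot restrict a priori to generic orbits.
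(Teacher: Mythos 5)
Your route is genuinely different from the paper's: the paper does not prove the key orthogonality \eqref{eq:scu} at all (it cites the proof of Lemma 2.5 in Xia--Zhang) and then obtains \eqref{eq:su} and \eqref{eq:cc} by the same soft linear algebra you use (nondegeneracy of $\omega$ forces $\omega(X^s,X^u)\neq0$ once $\omega(X^s,\bE^c)=0$ is known, and nondegeneracy of $\omega|_{\bE^c}$ plus compactness gives the two-sided bound). Your plan to make the hard step self-contained via $(\bE^s)^\omega=\bE^s\oplus\bE^c$ and dominated-splitting rigidity is the right idea and is essentially how the cited result is proved; the reduction of all three items to the two identifications is correct.

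There is, however, a gap in the case analysis of the rigidity step. The dichotomy you invoke --- either $(\bE^s)^\omega/\bE^s$ is transverse to $\overline{\bE^u}$ or it contains $\overline{\bE^u}$ --- is a \emph{pointwise} dichotomy, and the two alternatives can coexist: the set $Z=\{p:\omega_p(X^s,X^u)=0\}$ is a closed $f$-invariant set that a priori could be a nonempty proper subset of $M$. If $\emptyset\neq Z\subsetneq M$, neither branch of your argument applies as written: on $M\setminus Z$ the graph section $L\in\mathrm{Hom}(\overline{\bE^c},\overline{\bE^u})$ is only defined on a non-compact open set and may blow up near $Z$, so "boundedness on the compact $M$" is not available; and the Lagrangian alternative is not "$\omega(X^s,X^u)\equiv0$" on $M$ but only on $Z$. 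The repair is to run your Lagrangian-elimination argument on $Z$ itself: $Z$ is compact and invariant, $\bE^s\oplus\bE^u$ is Lagrangian over $Z$, the splitting $\bE^c=K^s\oplus K^u$ into invariant line bundles is continuous on $Z$, and the nonvanishing pairings $\omega(K^s,\bE^u)$, $\omega(K^u,\bE^s)$ are bounded away from $0$ and $\infty$ on $Z$ by compactness; bootstrapping these identities against \eqref{eq:ph} forces $\|Df^n|_{\bE^s}\|$ to decay faster than any exponential, contradicting $\|Df^n v\|\geq\|Df^{-1}\|^{-n}\|v\|$. This shows $Z=\emptyset$, after which the graph section is globally defined and bounded and your rigidity argument closes the proof. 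Note also that this last quantitative contradiction is only sketched in your write-up ("forces growth rates\dots that contradict the strict domination"); it does require the iteration just described, not a single application of \eqref{eq:ph}.
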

\begin{proof}
For the proof of \eqref{eq:scu}, we refer the reader to the 7th paragraph in the proof of Lemma 2.5 in \cite{XZ06}. Using that $\omega$ is nondegenerate, there exists $Y\in TM$ such that 
\[
\omega(X^s, Y)\neq0
\]
we can write $Y=aX^s+bX^u+X^c$, by linearity we have
\[
\omega(X^s, Y)=a\omega(X^s, X^s)+b\omega(X^s,X^u)+\omega(X^s,X^c)=b\omega(X^s,X^u)\neq0
\]
then $\omega(X^s,X^u)\neq0$, we define $h(p):=\omega_p(X^s(p),X^u(p))$.

Using again that $\omega$ is nondegenerate, there exists $\delta>0$ such that for every $p\in M$ there two unit vectors $X^c_1, X^c_2\in\bE^c(p)$ such that 
\[
\delta^{-1}\geq|\omega_p(X^c_1,X^c_2)|\geq\delta, 
\]
Then there exists $\varepsilon>0$ such that\footnote{ $\measuredangle(X^c_1,X^c_2)$ denotes the nonoriented angle measured using the given norm on $TM$. } 
\[
\measuredangle(X^c_1,X^c_2)>\varepsilon.
\]
If  $Y, Z\in \bE^c$ that are orthonormal then we can write a change of basis $A$ such that $(X^c_1,X^c_2)=A(Y, Z)$ where the determinant of $A$ is bounded above and below by constant $C_{\varepsilon}^{-1}$ and $C_{\varepsilon}$. We have 
\[
\omega(Y(p), Z(p))=\pm det(A)\omega(X^c_1,X^c_2)
\]
then we get the result for $C=(\delta C_{\varepsilon})^{-1}$.
\end{proof}

\section{Dynamical coherence}\label{sec:proof}
This section is devoted to the proof of Theorem \ref{thm:main}. We suppose that $(M,\omega)$ is a four dimensional symplectic manifold and $f: (M,\omega)\to (M,\omega)$ is a $C^2$ partially hyperbolic symplectomorphism whose stable and unstable bundles are $C^1$ oriented line bundles. 
The following Lemma is somehow standard and follows from \eqref{eq:su} and \eqref{eq:cc}.

\begin{lem} There exists $C>0$ such that
 if $X^s(p)\in\bE^s(p)$ and $X^u(p)\in\bE^u(p)$ are unit vectors then for every $n\in\mathbb{Z}$ we have
\begin{equation}\label{eq:sun}
C^{-1}\leq\|Df^n(p)X^s(p)\|\cdot \|Df^n(p)X^u(p)\|\leq C.
\end{equation}   
For every $n\in\mathbb{Z}$ and every $p\in M$, there are two orthonormal vectors $Y_{n}, Z_n\in \bE^c(p)$ such that 
\begin{equation}\label{eq:ccn}
C^{-2}\leq\|Df^n(p)Z_n\|\cdot \|Df^n(p)Y_n\|\leq C^2.
\end{equation} 
\end{lem}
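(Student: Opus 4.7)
The engine for both inequalities is the invariance identity
\[
\omega_p(v,w)=\omega_{f^n(p)}\bigl(Df^n(p)v,\,Df^n(p)w\bigr),\qquad v,w\in T_pM,\ n\in\mathbb{Z},
\]
which is $(f^n)^*\omega=\omega$ read pointwise. In each case the idea is to plug in a well-chosen basis on both sides and to use the pointwise bounds of the previous lemma together with compactness of $M$.

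For \eqref{eq:sun}, since $\bE^s$ and $\bE^u$ are oriented line bundles I can write $Df^n(p)X^s(p)=\alpha_n X^s(f^n(p))$ and $Df^n(p)X^u(p)=\beta_n X^u(f^n(p))$, with $|\alpha_n|=\|Df^n(p)X^s(p)\|$ and $|\beta_n|=\|Df^n(p)X^u(p)\|$. Substituting into the invariance identity and using \eqref{eq:su} gives $h(p)=\alpha_n\beta_n\,h(f^n(p))$, so
\[
\|Df^n(p)X^s(p)\|\cdot\|Df^n(p)X^u(p)\|=\frac{|h(p)|}{|h(f^n(p))|}.
\]
Since $h$ is continuous and nowhere-vanishing on the compact manifold $M$, $|h|$ is pinched between two positive constants, and the above ratio is uniformly bounded above and below by their quotient.

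For \eqref{eq:ccn} the obstacle is that an arbitrary orthonormal basis of $\bE^c(p)$ will in general map under $Df^n(p)$ to a non-orthogonal pair in $\bE^c(f^n(p))$, so \eqref{eq:cc} cannot be applied on the target side. The natural remedy is to select $\{Y_n,Z_n\}$ through a singular value decomposition of the restricted linear map $Df^n(p)|_{\bE^c(p)}\colon\bE^c(p)\to\bE^c(f^n(p))$; this produces an orthonormal pair in $\bE^c(p)$ whose images are orthogonal in $\bE^c(f^n(p))$. Setting $\widetilde Y=Df^n(p)Y_n/\|Df^n(p)Y_n\|$ and $\widetilde Z=Df^n(p)Z_n/\|Df^n(p)Z_n\|$ yields an orthonormal basis of $\bE^c(f^n(p))$, and the invariance identity becomes
\[
\omega_p(Y_n,Z_n)=\|Df^n(p)Y_n\|\cdot\|Df^n(p)Z_n\|\cdot\omega_{f^n(p)}(\widetilde Y,\widetilde Z),
\]
with both $|\omega_p(Y_n,Z_n)|$ and $|\omega_{f^n(p)}(\widetilde Y,\widetilde Z)|$ in $[C^{-1},C]$ by \eqref{eq:cc}. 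Solving for the product of norms gives the bound with constant $C^2$.

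The only real subtlety is recognizing that the orthonormal pair must be allowed to depend on $n$ and $p$, and that SVD delivers the correct $(n,p)$-adapted choice; once this is in place, everything reduces to symplectic invariance combined with the previous lemma.
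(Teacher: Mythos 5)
Your proposal is correct and follows essentially the same route as the paper: for \eqref{eq:sun} both arguments reduce the product of norms to the ratio $h(p)/h(f^n(p))$ via symplectic invariance and bound it by compactness, and for \eqref{eq:ccn} both select $Y_n,Z_n$ as the singular vectors of $Df^n(p)|_{\bE^c(p)}$ so that \eqref{eq:cc} applies on both sides. Your explicit handling of signs via $|\alpha_n|,|\beta_n|$ and $|h|$ is in fact slightly more careful than the paper's wording.
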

\begin{proof}

For \eqref{eq:sun}, we use \eqref{eq:su} and the fact that $(f^n)^*\omega=\omega$ to have
\[
h(p)=\omega_p(X^s(p), X^u(p))=(f^{n})^*\omega(X^s(p),X^u(p))=\omega_{f^n(p)}(Df^nX^s(p), Df^nX^u(p))
\]
using the invariance of the stable and unstable bundles, we can write 
$$
Df^nX^s(p)=\|Df^nX^s(p)\|X^s(f^n(p))\quad \text{ and }\quad Df^nX^u(p)=\|Df^nX^u(p)\|X^u(f^n(p))
$$ 
then we have
\[
h(p)=\|Df^nX^s(p)\|\cdot \|Df^nX^u(p)\|\cdot \omega_{f^n(p)}(X^s(f^n(p)), X^u(f^n(p)))
\]
which implies that $\|Df^nX^s(p)\|\cdot \|Df^nX^u(p)\|=\frac{h(p)}{h(f^n(p))}$. Since $a$ is bounded above and below, we get \eqref{eq:sun}.

We recall that $Df^n(p)|_{\bE^c}$ maps the unit circle to an ellipse, let $Y_n$ and $Z_n$ be the vectors that are mapped to the axis of the ellipse. These vectors are the singular vectors of $Df^n(p)|_{\bE^c}$ and they have the properties that $Y_n, Z_n$ are orthonormal and $Df^nY_n, Df^nZ_n$ are orthogonal. Therefore using \eqref{eq:cc} we have
\[
C^{-1}\leq \omega(Y_n, Z_n)\leq C.
\]
Since $(f^n)^*\omega=\omega$ then we have
\[
\begin{aligned}
\omega_p(Y_n, Z_n)&=(f^{n})^*\omega(Y_n,Z_n)=\omega_{f^n(p)}(Df^nY_n, Df^nZ_n)\\
&=\|Df^nY_n\|\cdot\|Df^nZ_n\|\cdot\omega_{f^n(p)}(\frac{Df^nY_n}{\|Df^nY_n\|}, \frac{Df^nZ_n}{\|Df^nZ_n\|})
\end{aligned}
\]
Then using \eqref{eq:cc} again we have 
\[
C^{-2}\leq \|Df^nY_n\|\cdot\|Df^nZ_n\|\leq C^2
\]
which gives \eqref{eq:ccn}.
\end{proof}
Let $\eta$ be the  $1$-form defined by 
\[
 \eta:=i_{X^s}\circ\omega.
\]
 Using \eqref{eq:su} and \eqref{eq:scu}  we have
\[
\ker(\eta)=\bE^s\oplus\bE^c\quad\text{ and }\quad \eta(X^u)\neq0.
\]
Since $\bE^s$ is $C^1$ then the vector field $X^s$ is also $C^1$ which implies that $\eta$ defines a $C^1$ differential $1$-form. The following lemma gives the involutivity condition in the classical frobenius Theorem.
\begin{lem}\label{lem:inv} We have the following
\[
\eta\wedge d\eta=0.
\]
\end{lem}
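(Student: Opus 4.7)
The plan is to use the invariance $f^*\omega = \omega$ to derive a transformation law for $\eta \wedge d\eta$ under $f$, and then push this relation to infinity using partial hyperbolicity \eqref{eq:ph} together with the symplectic area estimates \eqref{eq:sun} and \eqref{eq:ccn}. Recall that $\eta\wedge d\eta = 0$ is equivalent to $d\eta$ vanishing on the $3$-dimensional kernel $\ker\eta = \bE^s\oplus\bE^c$: indeed, for any $V_3$ with $\eta(V_3)\neq 0$ and $V_1,V_2\in\ker\eta$ one has $(\eta\wedge d\eta)(V_1,V_2,V_3) = \eta(V_3)\,d\eta(V_1,V_2)$.

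First I would show that $f^*\eta = \mu^{-1}\eta$, where $\mu$ is the nonzero scalar function defined by $Df X^s(p) = \mu(p)X^s(f(p))$. This is a direct computation: $(f^*\eta)_p(V) = \omega_{f(p)}(X^s(f(p)),DfV) = \omega_p((Df)^{-1}X^s(f(p)),V) = \mu(p)^{-1}\eta_p(V)$, using $f^*\omega=\omega$ and the invariance of $\bE^s$. Iterating gives $(f^n)^*\eta = \mu_n^{-1}\eta$ with $|\mu_n| = \|Df^n X^s\|$, and since $\eta\wedge\eta = 0$ one obtains $(f^n)^*(\eta\wedge d\eta) = \mu_n^{-2}(\eta\wedge d\eta)$. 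Evaluating both sides on $(V_1,V_2,X^u(p))$ with $V_1,V_2\in\bE^s(p)\oplus\bE^c(p)$, expanding $\eta\wedge d\eta$, and using that $\bE^s\oplus\bE^c$ is $Df^n$-invariant (so the $\eta(Df^n V_i)$ terms vanish) together with $\eta(X^u)=h$ from \eqref{eq:su}, one arrives at
\[
d\eta_p(V_1,V_2) \;=\; \frac{h(f^n p)\,\|Df^n X^u\|\,\|Df^n X^s\|^2}{h(p)}\,d\eta_{f^n p}(Df^n V_1, Df^n V_2).
\]
The compactness of $M$ keeps $h$ bounded away from $0$ and $\infty$, and \eqref{eq:sun} bounds $\|Df^n X^s\|\cdot\|Df^n X^u\|$, so the prefactor is $O(\|Df^n X^s\|)$. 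This yields a uniform estimate
\[
|d\eta_p(V_1,V_2)| \;\leq\; K\,\|Df^n X^s\|\,\|Df^n V_1\|\,\|Df^n V_2\|
\]
valid for every $n\geq 0$.

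It then remains to pick $V_1,V_2$ so that the right-hand side tends to zero. The case $V_1,V_2\in\bE^s$ is automatic. For $V_1=X^s$ and $V_2\in\bE^c$ a unit vector, the bound $\|Df^n V_2\|\leq C\lambda^n\|Df^n X^u\|$ from \eqref{eq:ph}, combined with \eqref{eq:sun}, gives $\|Df^n X^s\|\|Df^n V_1\|\|Df^n V_2\| = O(\lambda^{2n})\to 0$, so $d\eta_p(X^s,V_2)=0$. The delicate case is $V_1,V_2\in\bE^c$: the crude estimate $\|Df^n V^c\|\leq C\lambda^n\|Df^n X^u\|$ is insufficient here, since $\|Df^n X^u\|$ may grow like $\lambda^{-n}$. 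The remedy is to apply the bound with the singular vectors $Y_n,Z_n\in\bE^c(p)$ of $Df^n|_{\bE^c}$, which by \eqref{eq:ccn} satisfy $\|Df^n Y_n\|\|Df^n Z_n\|\leq C^2$; this yields $|d\eta_p(Y_n,Z_n)|\leq KC^2\|Df^n X^s\|\to 0$. Since any $2$-form on a $2$-dimensional Euclidean space takes the same absolute value on every orthonormal pair, this forces $d\eta_p$ to vanish on all of $\bE^c(p)$. Combining the three cases by bilinearity, $d\eta_p|_{\bE^s\oplus\bE^c} = 0$, equivalently $\eta\wedge d\eta = 0$.

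The main obstacle is the case $V_1,V_2\in\bE^c$: the partial hyperbolicity alone does not control $\|Df^n V_i\|$ well enough, and the symplectic area estimate \eqref{eq:ccn} together with the dynamically chosen singular frame $\{Y_n,Z_n\}$ provides exactly the leverage needed to close the argument.
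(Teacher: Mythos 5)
Your proof is correct and follows essentially the same strategy as the paper's: reduce $\eta\wedge d\eta=0$ to the vanishing of $d\eta$ on $\ker\eta=\bE^s\oplus\bE^c$, derive a renormalization estimate of the form $|d\eta_p(V_1,V_2)|\lesssim \|Df^nX^s\|\cdot\|Df^nV_1\|\cdot\|Df^nV_2\|$, and then kill the center--center case with the singular vectors of \eqref{eq:ccn} and the $s$--$c$ case with the domination in \eqref{eq:ph}. The only difference is in how that estimate is obtained --- you use the conformal pullback relation $(f^n)^*\eta=\mu_n^{-1}\eta$ and naturality of $d$, while the paper expands $d\eta$ via Cartan's formula into $d(1/F_n)\wedge i_{f^{-n}_*X^s}\omega+F_n^{-1}(f^n)^*(\mathcal{L}_{X^s}\omega)$; since $1/F_n=\|Df^nX^s\|$ and \eqref{eq:sun} bounds $\|Df^nX^s\|^2\|Df^nX^u\|$ by a constant times $\|Df^nX^s\|$, the two bounds coincide.
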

\begin{proof}
By invariance of the bundles, for every $n\in\mathbb{Z}$ we have $X^s=\frac{f^{-n}_*X^s}{F_n}$ where $F_n(p)=\|Df^{-n}_p|_{\bE^s}\|$. Using elementary rules of exterior derivative we have
\[
\begin{aligned}
d\eta&=d\left(i_{\frac{f^{-n}_*X^s}{F_n}}\circ\omega \right)=d\left(\frac{1}{F_n}\cdot i_{f^{-n}_*X^s}\circ\omega \right)\\
&=d\left(\frac{1}{F_n}\right)\wedge i_{f^{-n}_*X^s}\circ\omega+\frac{1}{F_n}\cdot d\circ i_{f^{-n}_*X^s}\circ\omega\\
&=d\left(\frac{1}{F_n}\right)\wedge i_{f^{-n}_*X^s}\circ\omega+\frac{1}{F_n}\cdot \mathcal {L}_{f^{-n}_*X^s}\omega\\
&=d\left(\frac{1}{F_n}\right)\wedge i_{f^{-n}_*X^s}\circ\omega+\frac{1}{F_n}\cdot \left(f^{n} \right)^*\left(\mathcal {L}_{X^s}\omega\right)
\end{aligned}
\]

where the penultimate equality uses Cartan formula $d\circ i+i\circ d=\mathcal L$ and the fact that $d\omega=0$. The last equality uses $(f^{-n})^*\omega=\omega$. 
Let $Y_n,Z_n\in\bE^c$  be given by  \eqref{eq:ccn},  then it is easy to see that $i_{f^{-n}_*X}\circ\omega(Y_n)=i_{f^{-n}_*X}\circ\omega(Z_n)=0$ which implies that 
\[
\begin{aligned}
\left|d\eta(Y_n,Z_n)\right|&=\left|\frac{1}{F_n}\cdot \left(f^{n} \right)^*\left(\mathcal {L}_{X^s}\omega\right)(Y_n,Z_n)\right|=\left|\frac{1}{F_n}\cdot \mathcal {L}_{X^s}\omega(f^{n}_*Y_n,f^{n}_*Z_n)\right|\\
&\leq\frac{\|f^{n}_*Y_n\|\cdot\|f^{n}_*Z_n\|}{F_n}\|\mathcal {L}_{X^s}\omega\|\leq C\frac{1}{F_n}\|\mathcal {L}_{X}\omega\|
\end{aligned}
\]
where the last inequality uses \eqref{eq:ccn}. Given two orthonornal vectors $Y,Z\in\bE^c$, we can use the basis $\{Y_n,Z_n\}$ and the linearity of $d\eta$ to have
\(
d\eta(Y,Z)=Bd\eta(Y_n,Z_n)
\)
where $B$ is the determinant of the change of basis matrix, therefore $|B|\leq4$. Thus we have 
$$
\left|d\eta(Y,Z)\right|\leq4C\frac{1}{F_n}\|\mathcal {L}_{X}\omega\|
$$

Therefore taking the limit as $n\to\infty$ on the right side gives
\begin{equation}\label{eq:cc0}
d\eta(Y,Z)=0
\end{equation}
For $X^c\in\bE^c$, using that $i_{f^{-n}_*X}\circ\omega(X^s)=i_{f^{-n}_*X}\circ\omega(X^c)=0$, we have
\[
\begin{aligned}
\left|d\eta(X^s,X^c)\right|&=\left|\frac{1}{F_n}\cdot \left(f^{n} \right)^*\left(\mathcal {L}_{X^s}\omega\right)(X^s,X^c)\right|=\left|\frac{1}{F_n}\cdot \mathcal {L}_{X^s}\omega(f^{n}_*X^s,f^{n}_*X^c)\right|\\
&\leq\frac{\|f^{n}_*X^s\|\cdot\|f^{n}_*X^c\|}{F_n}\|\mathcal {L}_{X^s}\omega\|.
\end{aligned}
\]
Then taking the limit as $n\to\infty$ and using \eqref{eq:ph} we have 
\begin{equation}\label{eq:sc0}
d\eta(X^s,X^c)=0.
\end{equation}

The lemma follows from \eqref{eq:cc0} and  \eqref{eq:sc0} we have
\(
d\eta|_{\ker(\eta)\times\ker(\eta)}=0
\) which implies that $\eta\wedge d\eta=0$.

\end{proof}

\begin{proof}[Proof of Theorem \ref{thm:main}]
By Lemma \ref{lem:inv}, using Frobenius Theorem \cite[Theorem 14.5]{L03}, we have that $\bE^s\oplus\bE^c=\ker(\eta)$ is tangent to a $C^1$ foliation $\mathcal F^{sc}$ and,  similarly,  $\bE^c\oplus\bE^u$ is tangent to a $C^1$ foliation $\mathcal F^{cu}$. The center foliation is given by 
\[
\mathcal F^{c}=\mathcal F^{sc}\cap\mathcal F^{cu}.
\]
Moreover the foliation $\mathcal F^c$ is $C^1$.
\end{proof}

 Eramane Bodian\\ 
 Mathematics Department, UFR of Science and Technology, Assane Seck University of Ziguinchor, BP: 523 (Senegal)\\
Email : m.bodian@univ-zig.sn \\

 Khadim War\\
     Email:warkhadim@gmail.com

\end{document}